\newtheorem{theorem}{Theorem}
\newtheorem{lemma}{Lemma}
\theoremstyle{definition}
\newtheorem{example}[theorem]{Example}
\newtheorem{problem}{Problem}
\theoremstyle{remark}
\newtheorem{remark}{Remark}
\newtheorem{conclusion remark}[theorem]{Conclusion Remark}
\begin{document}
\setcounter{page}{1}

\title[ complex symmetric weighted composition operators]
      {A  note on the complex symmetric weighted composition operators over Hardy space}

\author[ C. Jiang, SA. Han,  ZH. Zhou]{Cao Jiang, Shi-An Han,  Ze-Hua Zhou$^*$ }

\subjclass[2010]{47B33, 47B38}

\address{\newline Cao Jiang\newline School of Mathematics, Tianjin University,
 Tianjin 300354, P.R. China.}
\email{jiangcc96@163.com}

\address{\newline  Shi-An Han\newline College of Science, Civil Aviation University of China,
 Tianjin 300300, P.R. China.}
\email{hsatju@163.com}

\address{\newline  Ze-Hua Zhou\newline School of Mathematics, Tianjin University,
Tianjin 300354, P.R. China¡£} \email{zehuazhoumath@aliyun.com;
zhzhou@tju.edu.cn}

\keywords{complex symmetry, weighted composition operator, Hardy space}
\date{}
\thanks{\noindent $^*$Corresponding author.\\
This work was supported in part by the National Natural Science Foundation of
China (Grant Nos. 11771323; 11371276).}
\begin{abstract}
This paper provides a class of complex symmetric weighted composition operators on $H^2(\mathbb{D})$ to includes  the unitary subclass, the Hermitian subclass and the normal subclass  obtained  by Bourdon and Noor.  A characterization of algebraic weighted composition operator with degree no more than two  is provided to illustrate that the weight function of a complex symmetric weighted composition operator is not necessarily  linear fractional.
\end{abstract}

\maketitle
\section{Preliminaries}
\subsection{Complex Symmetricity}
Let $\mathcal{H}$ be a  complex Hilbert space  and $\mathcal{L}(\mathcal{H})$ be the collection of all  continuous linear operators on  $\mathcal{H}$.
A map $\mathcal{C}: \mathcal{H}\rightarrow \mathcal{H}$ is called a \textit{conjugation} over $\mathcal{H}$ if it is

$\bullet$ anti-linear: $\mathcal{C}(ax+by)=\overline{a}\mathcal{C}(x)+\overline{b}\mathcal{C}(y)$, $ x, y\in\mathcal{H}$,
                   $ a,b\in \mathbb{C}$;

$\bullet$ isometric: $\|\mathcal{C}x\|=\|x\|$, $ x\in\mathcal{H}$;

$\bullet$ involutive: $\mathcal{C}^2=I$.

 \noindent An operator $T\in \mathcal{L}(\mathcal{H})$  is   called \textit{complex symmetric}  if
$$\mathcal{C}T=T^*\mathcal{C},$$
for some conjugation $\mathcal{C}$ and in this case we say $T$ is \textit{complex symmetric with conjugation} $\mathcal{C}$.

The general study of complex symmetric operators was started by Garcia, Putinar and Wogen in \cite{GP1,GP2,GW1,GW2}.
The class of  complex symmetric operators turns to be quite diverse, see \cite{GP1,GP2,GW2}. In this paper, we focus  on two classes: the normal operator and  the algebraic operator with degree no more than two.
We know from the  Spectral Theorem  that  a normal operator is unitarily equivalent to some multiplier
$M_\phi:L^2(X,v)\rightarrow L^2(X,v). $
Take the usual conjugation $Cf(z)=\overline{f(z)}$, then  it is easy to check that $M_\phi$ is complex symmetric.  An operator $T\in\mathcal{L}(\mathcal{H})$ is said to be  \textit{algebraic} if it is annihilated by some nonzero polynomial $p$ and the minimal degree of $p$  is called the \textit{degree} of $T$.  Garcia and Wogen \cite{GW2} proved that an  algebraic operator with degree no more than two is  complex symmetric.

\subsection{Hardy Space}
The classical Hardy-Hilbert  space is defined by
$$H^2(\mathbb{D})=\left\{f\in H(\mathbb{D});\,\, \|f\|^2
  =\sup\limits_{0<r<1} \frac{1}{2\pi}\int_0^{2\pi}|f(re^{i\theta})|^2d\theta<\infty \right\}.  $$ $H^2(\mathbb{D})$ is a
reproducing kernel Hilbert space (RKHS), i.e., for any point $w\in\mathbb{D}$ there exists a unique function
$K_w\in H^2(\mathbb{D})$ such that $$f(w)=\langle f, K_w\rangle,$$ where $K_w=1/(1-\overline{w}z)$ is called the \textit{reproducing kernel }at $w$.

\subsection{Weighted Composition Operator}
Denote by $H(\mathbb{D})$ the set of all holomorphic functions over the unit disk and by $S(\mathbb{D})$ the  set of all holomorphic selfmaps  of the unit disk.
For any $\psi\in H(\mathbb{D})$ and $\varphi\in S(\mathbb{D})$,
the associated \textit{weighted composition operator} is defined by
$$W_{\psi,\varphi}(f)=\psi\cdot(f\circ\varphi).$$
When $u\equiv1$, we write $W_{1,\varphi}=C_\varphi$ which is the  \textit{composition operator}. The study of composition operator and weighted composition
 operator over  various holomorphic function spaces  has undergone a rapid development over the past four decades. For general background, see \cite{CM} and \cite{S}.

Composition operators on $H^2(\mathbb{D})$ are relatively well understood. For example, every normal composition operator $C_\varphi$ on  $H^2(\mathbb{D})$ must be  induced by a dilation $\varphi(z)=az, |a|\leq 1.$ However, the case for weighted composition operator  is more complicated.   There exists many nontrivial normal even Hermitian weighted composition operators on $H^2(\mathbb{D})$, see \cite{BNa,CK}. We will talk  more about it later.

The study of  complex symmetric weighted composition operators  on $H^2(\mathbb{D})$
 was initiated  independently by  Garcia and Hammond  in \cite{GH} and Jung \textit{et al.} in \cite{JKKL}.
Their idea is to consider the following  conjugation
$$ \mathcal{J} (\sum\limits_{n=0}^\infty a_nz^n)= \sum\limits_{n=0}^\infty \overline{a}_nz^n$$
and to characterize when $W_{\psi,\varphi}$ is complex symmetric under  $\mathcal{J}$. Their main result is the  following theorem:

\textbf{Theorem A }
\emph{Let $\varphi\in S(\mathbb{D})$ and $\psi\in H^\infty(\mathbb{D})$. Then  $W_{\psi.\varphi}$ is complex symmetric under the conjugation $\mathcal{J}$ iff
$$\psi(z)=\frac{b}{1-a_0z}\,\, and\,\, \varphi(z)=a_0+\frac{a_1z}{1-a_0z}.$$}

\noindent By making unitary transformations, Jung et al. \cite{JKKL}  provided some more examples. For more work on complex symmetric composition operators, see \cite{N2,NST,BNo,GZ2}.

In this paper, we will  make a slight  generalization of Jung's  result. As a corollary, we will give conjugations for the unitary weighted composition operator and the Hermitian weighted composition operator. To our surprise, we find that the normal subclass of our complex symmetric $W_{\psi,\varphi}$ coincide with those given by Bourdon and Noor in their early paper \cite{BNa}. We do not know whether this is  an accident or a hint that there are no more normal $W_{\psi,\varphi}$. It is still an open question to find a complete characterization of the normal $W_{\psi,\varphi}$ on $H^2(\mathbb{D})$.   Besides, we will also try to characterize all algebraic $W_{\psi,\varphi}$ of degree no more than two, since by doing so we will show that the weight function of a complex symmetric weighted composition operator is not necessarily linear fractional.

\section{Main Results}
\subsection{The Conjugation}
The main difficulty in constructing complex symmetric weighted composition operators is to find conjugation which is feasible to do calculation.  If  $\mathcal{C}_1$ and $\mathcal{C}_2$ are conjugations, then  $\mathcal{C}_1\mathcal{C}_2$ is a unitary operator. So  any two conjugations is related by  some unitary operator, i.e. $\mathcal{C}_2= \mathcal{C}_1(\mathcal{C}_1\mathcal{C}_2).$
Following this idea, we would get new conjugation if we composite an  unitary operator to the conjugation $\mathcal{J}$.  The choice of the unitary operator is the
unitary weighted composition operator in the following lemma.

\begin{lemma}\cite[Theorem 6]{BNa}\label{normal}
A weighted composition operator $W_{\psi,\varphi}$ is unitary on $H^2(\mathbb{D})$ if and only if $\varphi$ is an automorphism of the unit disk and $$\psi=\mu\frac{K_p}{\|K_p\|}=\mu\frac{\sqrt{1-|p|^2}}{1-\overline{p}z} $$ where $p=\varphi^{-1}(0)$ and $|\mu|=1$.
\end{lemma}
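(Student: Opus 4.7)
My plan splits the equivalence into an easy and a hard direction. For sufficiency, assuming $\varphi$ is a disk automorphism with $\varphi^{-1}(0)=p$ and $\psi = \mu K_p/\|K_p\|$, I would verify $W_{\psi,\varphi}^{*}W_{\psi,\varphi} = I = W_{\psi,\varphi}W_{\psi,\varphi}^{*}$ by testing against the reproducing kernels. The standard formula $W_{\psi,\varphi}^{*}K_w = \overline{\psi(w)}K_{\varphi(w)}$, combined with the automorphism identity $1-|\varphi(w)|^2 = (1-|p|^2)(1-|w|^2)/|1-\overline{p}w|^2$, makes both operator equalities drop out of norm computations on kernels.

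For necessity, the driving observation is that unitarity gives more than isometry: applying $W_{\psi,\varphi}W_{\psi,\varphi}^{*} = I$ to $K_w$ yields $\overline{\psi(w)}\,W_{\psi,\varphi}K_{\varphi(w)} = K_w$. Evaluating at $z$ unfolds this into the pointwise functional identity
\[
\psi(z)\,\overline{\psi(w)}\,(1-\overline{w}z) \;=\; 1 - \overline{\varphi(w)}\,\varphi(z), \qquad z,w\in\mathbb{D}.
\]
Setting $w=0$ delivers $\psi(z) = (1-\overline{\varphi(0)}\varphi(z))/\overline{\psi(0)}$ and $|\psi(0)|^2 = 1-|\varphi(0)|^2$, so $\psi$ is rigidly determined by $\varphi$ up to a unimodular scalar.

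To pin down $\varphi$ itself, I would normalize via a Möbius involution. Write $\alpha = \varphi(0)$ and set $\Phi(z) = (\alpha-\varphi(z))/(1-\overline{\alpha}\varphi(z))$, so $\Phi(0)=0$. Substituting the formula for $\psi$ back into the functional identity and invoking $1-\overline{\alpha}\varphi(z) = (1-|\alpha|^2)/(1-\overline{\alpha}\Phi(z))$ together with its $w$-analogue, the identity should collapse to $\overline{w}z = \overline{\Phi(w)}\Phi(z)$. Specializing $w=z$ then gives $|\Phi(z)|=|z|$, and the Schwarz lemma forces $\Phi(z)=\lambda z$ with $|\lambda|=1$. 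Inverting the substitution recovers $\varphi$ as a disk automorphism with $\varphi^{-1}(0)=p$ for some $p\in\mathbb{D}$, while the formula for $\psi$ rearranges into $\mu\sqrt{1-|p|^2}/(1-\overline{p}z)$ after absorbing unimodular constants into $\mu$. The main obstacle in this plan is precisely the algebraic collapse producing $\overline{w}z=\overline{\Phi(w)}\Phi(z)$: although forced by the sesqui-analytic structure of the functional identity, the bookkeeping of factors involving $\alpha$, $\overline{\psi(0)}$, and the Möbius substitution must be handled with care.
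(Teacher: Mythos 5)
Your argument is correct. Note that the paper itself offers no proof of this lemma---it is imported verbatim from Bourdon and Narayan \cite[Theorem 6]{BNa}---so there is nothing internal to compare against; your reproducing-kernel derivation is essentially the standard proof of that cited result. The functional identity $\psi(z)\overline{\psi(w)}(1-\overline{w}z)=1-\overline{\varphi(w)}\varphi(z)$ obtained from $W_{\psi,\varphi}W_{\psi,\varphi}^{*}K_w=K_w$ does collapse, after the M\"obius normalization $\Phi=\alpha_{\varphi(0)}\circ\varphi$ and the identity $1-\overline{\alpha}\varphi(z)=(1-|\alpha|^2)/(1-\overline{\alpha}\Phi(z))$, to $\overline{w}z=\overline{\Phi(w)}\Phi(z)$, and Schwarz's lemma finishes the job. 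The only spot worth tightening is the sufficiency claim that $W^{*}W=I$ also ``drops out of norm computations on kernels'': the kernel computation directly gives $\langle W^{*}K_w,W^{*}K_v\rangle=\langle K_w,K_v\rangle$, i.e.\ $WW^{*}=I$, and for the other identity it is cleaner to observe that $W_{\psi,\varphi}$ is invertible (with inverse $W_{1/(\psi\circ\varphi^{-1}),\varphi^{-1}}$) so that $WW^{*}=I$ forces $W^{*}=W^{-1}$ and hence $W^{*}W=I$ as well.
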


\begin{lemma}\label{conjugation}

 Suppose that  $$\sigma(z)=\lambda\frac{p-z}{1-\overline{p}z}\ \ and\ \
 k_p(z)=\frac{\sqrt{1-|p|^2}}{1-\overline{p}z},$$
 where $p\in\mathbb{D}$ and $|\lambda|=1$.
Then  $\mathcal{J}W_{k_p,\sigma}$ defines a conjugation on $H^2(\mathbb{D})$ if and only if  $\overline{p}=\lambda p$.
 \end{lemma}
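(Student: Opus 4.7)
The three defining properties of a conjugation split naturally. Anti-linearity is immediate since $\mathcal{J}$ is anti-linear and $W_{k_p,\sigma}$ is linear. Isometry is also free: $\sigma$ is a disk automorphism satisfying $\sigma(p)=0$, so $\sigma^{-1}(0)=p$ and $k_p=K_p/\|K_p\|$, hence Lemma \ref{normal} (with $\mu=1$) gives $W_{k_p,\sigma}$ unitary on $H^2(\mathbb{D})$; combined with the isometry of $\mathcal{J}$ this forces $\mathcal{J}W_{k_p,\sigma}$ to be isometric. Thus the entire content of the lemma lies in the involutivity $(\mathcal{J}W_{k_p,\sigma})^2=I$.

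My plan for involutivity is to use the pointwise formula $\mathcal{J}f(z)=\overline{f(\overline{z})}$, which converts products with $\mathcal{J}$ into explicit substitutions. Setting $T=\mathcal{J}W_{k_p,\sigma}$ and $\widetilde{\sigma}(z):=\overline{\sigma(\overline{z})}=\overline{\lambda}(\overline{p}-z)/(1-pz)$, a direct two-step substitution yields
\[
T^{2}f(z)=\overline{k_p(\overline{z})}\cdot k_p\!\bigl(\widetilde{\sigma}(z)\bigr)\cdot f\!\bigl(\sigma(\widetilde{\sigma}(z))\bigr).
\]
Because the weight factor in front of $f$ is a nonvanishing holomorphic function and $\sigma\circ\widetilde{\sigma}$ is a Möbius self-map of $\mathbb{D}$, requiring $T^{2}=I$ decouples into two identities: (i) $\sigma\circ\widetilde{\sigma}=\mathrm{id}$ on $\mathbb{D}$, and (ii) $\overline{k_p(\overline{z})}\,k_p(\widetilde{\sigma}(z))\equiv 1$.

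The economical way to handle (i) is to invert $\sigma$ once and for all: a short calculation gives $\sigma^{-1}(w)=(p-\overline{\lambda}w)/(1-\overline{\lambda}\overline{p}w)$, so $\sigma\circ\widetilde{\sigma}=\mathrm{id}$ iff $\widetilde{\sigma}=\sigma^{-1}$. Comparing the two linear-fractional formulas shows this is equivalent to the single scalar equation $\overline{\lambda}\overline{p}=p$, i.e.\ $\overline{p}=\lambda p$. For the forward direction I will then assume $\overline{p}=\lambda p$ and verify (ii) by direct substitution; the identities $\overline{p}\,\overline{\lambda}=p$ and $\overline{p}^{2}\overline{\lambda}=|p|^{2}$ (both immediate from $\overline{p}=\lambda p$) collapse the denominator in $k_p(\widetilde{\sigma}(z))$ to $1-|p|^{2}$, which cancels the $\sqrt{1-|p|^{2}}$ factors as required. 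The converse follows because failure of $\overline{p}=\lambda p$ already destroys (i), so $T^{2}$ cannot reduce to the identity.

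The only potentially delicate point is justifying that (i) and (ii) are genuinely equivalent to $T^{2}=I$ (rather than sufficient): this is where one uses that $f$ ranges over all of $H^{2}(\mathbb{D})$, so testing $T^{2}f=f$ on a separating family (say, the reproducing kernels or the monomial basis) forces both the composition symbol and the multiplicative weight to agree with those of the identity operator. Beyond this, the obstacle is purely bookkeeping in a linear-fractional computation, with no conceptual difficulty.
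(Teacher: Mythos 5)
Your proposal is correct and follows essentially the same route as the paper: anti-linearity and isometry come for free from Lemma~\ref{normal}, and involutivity is reduced to the two identities for the weight $\overline{k_p(\overline{z})}\,k_p(\overline{\sigma(\overline{z})})$ and the symbol $\sigma(\overline{\sigma(\overline{z})})$, each handled by the same linear-fractional bookkeeping. The only (cosmetic) difference is that you extract the necessity of $\overline{p}=\lambda p$ from the symbol condition $\sigma\circ\widetilde{\sigma}=\mathrm{id}$ via $\sigma^{-1}$, whereas the paper gets it from the weight condition by testing on $f\equiv 1$; both are valid and of equal length.
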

\begin{proof}
By Lemma~\ref{normal},  $\mathcal{J}W_{k_p,\sigma}$ is anti-linear and isometric, so it remains to analyze when $\mathcal{J}W_{k_p,\sigma}$ is an involution.

For any $f\in H^2(\mathbb{D})$, we have
$$\mathcal{J}W_{k_p,\sigma}f(z)=\mathcal{J}(k_p(z)f(\sigma(z)))=\overline{k_p(\overline{z})}\overline{f(\sigma(\overline{z}))},$$
and then
\begin{eqnarray*}
  (\mathcal{J}W_{k_p,\sigma})^2f(z) &=& \mathcal{J}W_{k_p,\sigma}\overline{k_p(\overline{z})}\overline{f(\sigma(\overline{z}))} \\
   &=& \overline{k_p(\overline{z})}k_p\left(\overline{\sigma(\overline{z})}\right)f(\sigma(\overline{\sigma(\overline{z})})).
\end{eqnarray*}

If  $\mathcal{J}W_{k_p,\sigma}$ is involutive, then by taking $f\equiv1$ we have
\begin{eqnarray*}
   1 &=& \overline{k_p(\overline{z})}k_p\left(\overline{\sigma(\overline{z})}\right) \\
   &=& \frac{\sqrt{1-|p^2|}}{1-pz}\frac{\sqrt{1-|p^2|}}{1-\overline{p}\overline{\lambda}\frac{\overline{p}-z}{1-pz}}\\
   &=&\frac{1-|p^2|}{1-\overline{\lambda p^2}-(p-\overline{p\lambda})z},
\end{eqnarray*}
which implies  $\lambda p=\overline{p}$.

Conversely, if we assuming $\lambda p=\overline{p}$, then
$$\overline{k_p(\overline{z})}k_p(\overline{\sigma(\overline{z})})=1,$$
and
\begin{eqnarray*}
  \sigma(\overline{\sigma(\overline{z})}) &=& \lambda\frac{p-\overline{\lambda}\frac{\overline{p}-z}{1-pz}}
{1-\overline{p}\overline{\lambda}\frac{\overline{p}-z}{1-pz}}\\
  &=& \frac{\lambda p-\lambda p^2z-\overline{p}+z}{1-pz-\overline{\lambda}\overline{p}^2+\overline{\lambda}\overline{p}z} \\
  &=&   \frac{z-|p|^2z}{1-|p|^2},\,\,\,\,(\lambda p=\overline{p})\\
   &=& z.
\end{eqnarray*}

So  $(\mathcal{J}W_{k_p,\sigma})^2=I$, that is, $\mathcal{J}W_{k_p,\sigma}$ is a conjugation.
\end{proof}
\begin{remark}
Throughout the paper,  the symbol $\mathcal{J}W_{k_p,\sigma}$ is always referred to  the conjugation constructed above, that is,
$\lambda=\frac{\overline{p}}{p}$ if $p\neq 0$ and $\lambda$ is any unimodular constant if $p=0$.

\end{remark}

\subsection{Complex Symmetry}

Now we will investigate when a  weighted composition operator  $W_{\psi,\varphi}$ is complex symmetric under the conjugation $\mathcal{J}W_{k_p,\sigma}$.
Having the result of Jung \textit{et al.} at hand, this is not a difficult question to answer. We list the result in the following theorem.

\begin{theorem}\label{cwc}
Let $\varphi\in S(\mathbb{D})$, $\psi\in H^\infty(\mathbb{D})$. Then  $W_{\psi,\varphi}$ is complex symmetric under
 the conjugation $\mathcal{J}W_{k_p,\sigma}$ where  $$\sigma(z)=\lambda\frac{p-z}{1-\overline{p}z}\ \ and\ \
 k_p(z)=\frac{\sqrt{1-|p|^2}}{1-\overline{p}z}$$
with $p\in\mathbb{D}$ and $|\lambda|=1$ if and only if
$$\psi(z)=\frac{c}{1-a_0p-(p-\overline{\lambda}a_0)z} \ \ and \ \
\varphi(z)=a_0+\frac{a_1(p-\overline{\lambda}z)}{1-a_0p-(p-\overline{\lambda}a_0)z}.$$
\end{theorem}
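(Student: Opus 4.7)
The plan is to reduce the theorem to Theorem A by conjugating away the unitary factor $W_{k_p,\sigma}$ in the involved conjugation, then inverting the map $\sigma$ to recover the explicit forms of $\psi$ and $\varphi$.

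First I would establish the following general principle: if $U\in\mathcal{L}(\mathcal{H})$ is unitary and $\mathcal{C}=\mathcal{J}U$ is a conjugation, then an operator $T$ is complex symmetric under $\mathcal{J}U$ if and only if $UT$ is complex symmetric under $\mathcal{J}$. The proof is just algebra: the identity $(\mathcal{J}U)^2=I$ rewrites as $\mathcal{J}U\mathcal{J}=U^{-1}$, equivalently $U\mathcal{J}U=\mathcal{J}$; using this, the equation $\mathcal{J}UT=T^{*}\mathcal{J}U$ is equivalent to $\mathcal{J}(UT)=(UT)^{*}\mathcal{J}$. Applying this with $U=W_{k_p,\sigma}$ (unitary by Lemma~\ref{normal}) and $T=W_{\psi,\varphi}$ reduces the problem to understanding when the product $W_{k_p,\sigma}W_{\psi,\varphi}$ is complex symmetric under $\mathcal{J}$.

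Next I would compute that product explicitly: a direct calculation gives
\[
W_{k_p,\sigma}W_{\psi,\varphi}=W_{k_p\cdot(\psi\circ\sigma),\,\varphi\circ\sigma}.
\]
Applying Theorem A to this weighted composition operator, $W_{\psi,\varphi}$ is complex symmetric under $\mathcal{J}W_{k_p,\sigma}$ if and only if there exist constants $b,a_0,a_1$ with $a_0\in\mathbb{D}$ such that
\[
k_p(z)\,\psi(\sigma(z))=\frac{b}{1-a_0z},\qquad \varphi(\sigma(z))=a_0+\frac{a_1z}{1-a_0z}.
\]

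The remaining work is to substitute $z\mapsto\sigma^{-1}(z)$ in both identities and simplify. Using the relation $\overline{p}=\lambda p$ ensured by Lemma~\ref{conjugation}, one verifies (by solving the Möbius equation) that
\[
\sigma^{-1}(z)=\frac{p-\overline{\lambda}z}{1-pz},
\]
and the crucial intermediate computations
\[
1-\overline{p}\,\sigma^{-1}(z)=\frac{1-|p|^{2}}{1-pz},\qquad 1-a_0\sigma^{-1}(z)=\frac{1-a_0p-(p-\overline{\lambda}a_0)z}{1-pz}
\]
collapse the formulas to the stated expressions for $\psi$ and $\varphi$, with the constant $c=b\sqrt{1-|p|^{2}}$ absorbing the normalization. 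The converse direction is then automatic: reversing the substitutions shows these explicit $\psi,\varphi$ satisfy the Theorem~A criterion for $W_{k_p,\sigma}W_{\psi,\varphi}$, which by the first step is equivalent to complex symmetry of $W_{\psi,\varphi}$ under $\mathcal{J}W_{k_p,\sigma}$.

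The conceptual point — the unitary-equivalence trick for conjugations — is the only real content; the main obstacle is purely computational, namely managing the linear-fractional algebra and keeping careful track of the identity $\lambda p=\overline{p}$ (and its consequences $\overline{\lambda}\overline{p}=p$, $\lambda p^{2}=|p|^{2}$) when simplifying $k_p\circ\sigma^{-1}$ and the two denominators. I do not foresee any subtle step beyond that.
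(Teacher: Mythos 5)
Your proposal is correct and follows essentially the same route as the paper: both reduce the problem via the identity $\mathcal{J}W_{k_p,\sigma}=W_{k_p,\sigma}^{*}\mathcal{J}$ to the complex symmetry of $W_{k_p\cdot(\psi\circ\sigma),\varphi\circ\sigma}$ under $\mathcal{J}$, invoke Theorem A, and then compose with $\sigma^{-1}$ to recover $\psi$ and $\varphi$. Your explicit formula $\sigma^{-1}(z)=\frac{p-\overline{\lambda}z}{1-pz}$ and the two denominator simplifications are exactly the computations the paper leaves implicit, and they check out.
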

\begin{proof}
By definition, $W_{\psi,\varphi}$ is complex symmetric  under the conjugation $\mathcal{J}W_{k_p,\sigma}$ if and only if
\begin{equation}\label{J}
   \mathcal{J}W_{k_p,\sigma}W_{\psi,\varphi}
   =W_{\psi,\varphi}^*\mathcal{J}W_{k_p,\sigma}.
\end{equation}
Note that $\mathcal{J}W_{k_p,\sigma}$ is a conjugation, so  $$ W^*_{k_p,\sigma}=\mathcal{J}W_{k_p,\sigma}\mathcal{J},$$
and then  (\ref{J}) is equivalent to $$W_{k_p\cdot\psi\circ\sigma,\varphi\circ\sigma}\mathcal{J}=\mathcal{J}W_{k_p\cdot\psi\circ\sigma,\varphi\circ\sigma}^*.$$

It follows  from Theorem A that $W_{k_p\cdot\psi\circ\sigma,\varphi\circ\sigma}$
is complex symmetric under the conjugation  $\mathcal{J}$ if and only if
$$k_p(z)\cdot\psi(\sigma(z))=\frac{b}{(1-a_0z)} \ \ and \ \ \varphi(\sigma(z))=a_0+\frac{a_1z}{1-a_0z}.$$
Consequently, we have
\begin{eqnarray*}
  \psi(z) &=& \frac{k_p\cdot\psi\circ\sigma}{k_p}\circ\sigma^{-1}(z)\\&=&\frac{b\sqrt{1-|p|^2}}{1-a_0p-(p-\overline{\lambda}a_0)z} \\
   &=&\frac{c}{1-a_0p-(p-\overline{\lambda}a_0)z}
\end{eqnarray*}
and
\begin{eqnarray*}
  \varphi(z)&=&\varphi\circ\sigma\circ\sigma^{-1}(z)\\
  &=&a_0+\frac{a_1(p-\overline{\lambda}z)}{1-a_0p-(p-\overline{\lambda}a_0)z}.
\end{eqnarray*}
This completes the proof.
\end{proof}

\subsection{Relation with the Normal Class} Generally speaking, the class of complex symmetric operators includes the normal operators, particularly the unitary operators and the Hermitian operators.
In this part, we will  apply Theorem~\ref{cwc} to  give a conjugation with which a unitary (Hermitian, normal resp.) weighted composition operator is complex symmetric.

We first consider the unitary class and Hermitian class. For the unitary weighted composition operator  listed in Lemma \ref{normal}, the conjugation is given by the following theorem.
\begin{theorem}
For the unitary weighted composition operator $W_{\psi,\varphi}$ where
 $$\varphi(z)=\mu_1\frac{q-z}{1-\overline{q}z}\ \ and\ \
 \psi(z)=\mu_2\frac{\sqrt{1-|q|^2}}{1-\overline{q}z}$$
 with $q\in\mathbb{D}$ and $|\mu_1|=|\mu_2|=1$, it is complex symmetric with conjugation $\mathcal{J}W_{k_{\overline{q}},\sigma}$. \end{theorem}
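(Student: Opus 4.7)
The plan is to obtain this as a direct specialization of Theorem~\ref{cwc}. The conjugation in the statement is of the form $\mathcal{J}W_{k_p,\sigma}$ with $p=\overline{q}$, and the convention fixed in the Remark forces $\lambda=\overline{p}/p=q/\overline{q}$, hence $\overline{\lambda}=\overline{q}/q$, whenever $q\neq 0$; the degenerate case $q=0$ will be handled by picking any unimodular $\lambda$ as allowed by the same Remark.

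The strategy is then to verify that the unitary pair $(\psi,\varphi)$ of Lemma~\ref{normal} already fits the template
\[
\psi(z)=\frac{c}{1-a_0p-(p-\overline{\lambda}a_0)z},\qquad
\varphi(z)=a_0+\frac{a_1(p-\overline{\lambda}z)}{1-a_0p-(p-\overline{\lambda}a_0)z}
\]
given by Theorem~\ref{cwc}, for a suitable choice of the free parameters $a_0,a_1,c$. I would take $a_0=0$. Substituting $p=\overline{q}$ and $\overline{\lambda}=\overline{q}/q$ collapses the common denominator to $1-\overline{q}z$, so that $\psi(z)=c/(1-\overline{q}z)$; and using the factorization $p-\overline{\lambda}z=\overline{q}-(\overline{q}/q)z=(\overline{q}/q)(q-z)$ one obtains $\varphi(z)=a_1(\overline{q}/q)(q-z)/(1-\overline{q}z)$. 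The scalar choices $c=\mu_2\sqrt{1-|q|^2}$ and $a_1=\mu_1 q/\overline{q}$ then reproduce exactly the unitary $\psi$ and $\varphi$, so Theorem~\ref{cwc} applies and yields complex symmetry with the stated conjugation.

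For $q=0$ the operator becomes $W_{\mu_2,-\mu_1 z}$; setting $p=0$, $a_0=0$, $c=\mu_2$, and $a_1=-\mu_1\lambda$ (for any unimodular $\lambda$) in Theorem~\ref{cwc} produces the same match and closes this case. The main, and really only, obstacle is the algebraic bookkeeping that identifies $p-\overline{\lambda}z$ with a scalar multiple of $q-z$ through $\lambda=q/\overline{q}$; once this factorization is in hand, the theorem is essentially a one-line invocation of Theorem~\ref{cwc}.
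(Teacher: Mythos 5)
Your proof is correct and is essentially identical to the paper's, which likewise just sets $a_0=0$, $p=\overline{q}$, $a_1=\lambda\mu_1$, $c=\mu_2\sqrt{1-|q|^2}$ in Theorem~\ref{cwc}; your explicit factorization $\overline{q}-\overline{\lambda}z=(\overline{q}/q)(q-z)$ is just the bookkeeping the paper leaves implicit. The only slip is in your $q=0$ case: matching $\varphi(z)=-\mu_1 z$ with $-a_1\overline{\lambda}z$ requires $a_1=\mu_1\lambda$, not $a_1=-\mu_1\lambda$.
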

\begin{proof}
Let  $a_0=0$, $p=\overline{q}$, $a_1=\lambda\mu_1$ and $c=\mu_2\sqrt{1-|q|^2}$ in Theorem \ref{cwc}, the result then follows. \end{proof}

The class of Hermitian weighted composition operators on $H^2(\mathbb{D})$ is completely characterized by  Cowen and Ko \cite{CK}. For this, we have:
\begin{theorem}
For the Hermitian weighted composition operator $W_{\psi,\varphi}$ where
 $$\varphi(z)=b_0+\frac{b_1z}{1-\overline{b}_0z}\ \ and\ \
 \psi(z)=\frac{b_2}{1-\overline{b}_0z}$$
 with $b_0\in\mathbb{D}$ and $b_1,b_2\in\mathbb{R}$, it is complex symmetric with conjugation $\mathcal{J}C_{\lambda z}$ where $\lambda \overline{b_0}+b_0=0$.
\end{theorem}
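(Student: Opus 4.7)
The plan is to read the theorem off Theorem~\ref{cwc} by specializing the free parameter $p$ to $0$. At $p=0$ the constraint $\overline{p}=\lambda p$ from Lemma~\ref{conjugation} is vacuous, so every unimodular $\lambda$ yields a valid conjugation; moreover $k_0\equiv 1$ and $\sigma(z)=\lambda(0-z)/(1-0)=-\lambda z$, whence $W_{k_0,\sigma}=C_{-\lambda z}$. Thus the conjugation produced by Theorem~\ref{cwc} takes the simple form $\mathcal{J}C_{-\lambda z}$, which after renaming the unimodular parameter may be written $\mathcal{J}C_{\lambda z}$ as in the statement.

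Next, I would substitute $p=0$ into the two linear-fractional formulas of Theorem~\ref{cwc} and simplify, obtaining
\[
\psi(z)=\frac{c}{1+\overline{\lambda}a_0 z},\qquad \varphi(z)=a_0-\frac{a_1\overline{\lambda}z}{1+\overline{\lambda}a_0 z},
\]
and then compare them with the Cowen--Ko parametrization of the Hermitian class,
\[
\psi(z)=\frac{b_2}{1-\overline{b_0}z},\qquad \varphi(z)=b_0+\frac{b_1z}{1-\overline{b_0}z}.
\]
Matching constant terms forces $a_0=b_0$ and $c=b_2$. Matching the common denominator $1+\overline{\lambda}a_0 z$ with $1-\overline{b_0}z$ forces $\overline{\lambda}\,b_0=-\overline{b_0}$, which on taking conjugates is precisely the advertised constraint $\lambda\overline{b_0}+b_0=0$. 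The numerator of $\varphi$ then determines $a_1$ in terms of $\lambda$ and the real number $b_1$, and all the hypotheses of Theorem~\ref{cwc} are met.

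The main obstacle is essentially notational: keeping track of how the $\lambda$ appearing in the self-map $\sigma(z)=\lambda(p-z)/(1-\overline{p}z)$ of Lemma~\ref{conjugation} gets identified with the $\lambda$ in the composition symbol $\mathcal{J}C_{\lambda z}$, and ensuring the corresponding sign convention is consistent in the denominator comparison. Once this bookkeeping is handled, the two parameter families are seen to match termwise. The degenerate case $b_0=0$ is straightforward: the relation $\lambda\overline{b_0}+b_0=0$ imposes nothing, any unimodular $\lambda$ works, and this is consistent with the fact that a Hermitian $W_{\psi,\varphi}$ with $b_0=0$ is already complex symmetric under $\mathcal{J}=\mathcal{J}C_{1\cdot z}$ by Theorem A.
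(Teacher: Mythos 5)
Your approach is exactly the paper's: the entire published proof is the one line ``let $a_0=b_0$, $c=b_2$, $p=0$ and choose $\lambda,a_1$ with $\lambda\overline{b_0}=-b_0$, $a_1\overline{\lambda}=-b_1$ in Theorem~\ref{cwc}.'' Your parameter matching ($a_0=b_0$, $c=b_2$, the denominator comparison giving $\overline{\lambda}b_0=-\overline{b_0}$, and $a_1$ determined by $b_1$) reproduces this faithfully.

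However, the sign bookkeeping that you yourself identify as ``the main obstacle'' is not actually resolved, and this is a genuine (if small) gap. With $p=0$ the conjugation delivered by Theorem~\ref{cwc} is $\mathcal{J}C_{-\lambda z}$, where $\lambda$ is the parameter satisfying $\lambda\overline{b_0}+b_0=0$. You then say this ``may be written $\mathcal{J}C_{\lambda z}$ after renaming the unimodular parameter,'' but you keep the constraint in terms of the \emph{old} parameter: if $\mu=-\lambda$ names the symbol of the conjugation $\mathcal{J}C_{\mu z}$, the constraint becomes $\mu\overline{b_0}-b_0=0$, not $\mu\overline{b_0}+b_0=0$. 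A direct check confirms this: since $W_{\psi,\varphi}$ is Hermitian, complex symmetry under $\mathcal{J}C_{\mu z}$ amounts to $\mu\overline{\varphi(\mu\overline{z})}=\varphi(z)$ and $\overline{\psi(\mu\overline{z})}=\psi(z)$, both of which reduce (using $b_1,b_2\in\mathbb{R}$) to $\mu\overline{b_0}=b_0$; with $\mu\overline{b_0}=-b_0$ the intertwining fails whenever $b_0\neq 0$. To be fair, the paper's own statement and proof contain the identical slip --- the $\lambda$ of Theorem~\ref{cwc} is silently identified with the $\lambda$ in $C_{\lambda z}$ --- so you have reproduced the published argument, error included; but a careful write-up should either state the conjugation as $\mathcal{J}C_{-\lambda z}$ with $\lambda\overline{b_0}+b_0=0$, or as $\mathcal{J}C_{\lambda z}$ with $\lambda\overline{b_0}=b_0$.
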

 \begin{proof}
Let $a_0=b_0$, $c=b_2$, $p=0$ and $\lambda, a_1$ be such that $\lambda \overline{b_0}=-b_0,\  a_1\overline{\lambda}=-b_1$ in Theorem \ref{cwc}, then the result follows. \end{proof}

The case of normal class is a little   complicated.  Bourdon and  Narayan \cite{BNa} studied the normality of  weighted composition operators on $H^2(\mathbb{D})$.
Their main results are the following two theorems.

\vspace{2pt}

\textbf{Theorem B} \cite[Theorem 10]{BNa}  \emph{Suppose that $\varphi\in S(\mathbb{D})$ has a fixed point $p\in\mathbb{D}$. Then $W_{\psi,\varphi}$
acting on $H^2(\mathbb{D})$ is normal if and only if
$$\psi=\gamma\frac{K_p}{K_p\circ\varphi}\ \ and \ \ \varphi=\alpha_p\circ(\delta\alpha_p),$$
where $\alpha_p(z)=\frac{p-z}{1-\overline{p}z}$, $\gamma$, $\delta$ are constants with $\delta$ satisfying $|\delta|\leq1$.}

\textbf{Theorem C} \cite[Proposition 12]{BNa}  \emph{
Suppose that
$$\varphi(z)=\frac{az+b}{cz+d},\ \  ad-bc\neq 0$$
is a linear fractional selfmap of the unit disk and $\psi=K_{\varphi^*(0)}$,
where $\varphi^*(z)=\frac{\overline{a}z-\overline{c}}{-\overline{b}z+\overline{d}}$. Then $W_{\psi,\varphi}$ acting on $H^2(\mathbb{D})$
is normal if and only if
\begin{equation}\label{14}
  \frac{|d|^2}{|d|^2-|b|^2-(\overline{b}a-\overline{d}c)z}C_{\varphi^*\circ\varphi}
  =\frac{|d|^2}{|d|^2-|c|^2-(\overline{b}d-c\overline{a})z}C_{\varphi\circ\varphi^*}.
\end{equation}}

The above two theorems list all the normal weighted composition operators known up to now: Theorem B gives a complete characterization in the case when $\varphi$ has an interior fixed point; Theorem C gives some partial results in the case when $\varphi$ is a linear fractional  transformation.   For the remainder of this part, we will give a conjugation with which the normal $W_{\psi,\varphi}$ of the above theorems is complex symmetric.

For  Theorem \textmd{B}, checking its proof, one can find that $W_{\psi,\varphi}$ is actually unitarily equivalent to the composition operator $C_{\delta z}$ via the formula
\begin{eqnarray*}
  W_{\psi,\varphi} &=& W_{k_p,\alpha_p}\circ C_{\delta z}\circ W_{k_p,\alpha_p}^{-1} \\
  &=&W_{k_p,\alpha_p}\circ C_{\delta z}\circ W_{k_p^{-1}\circ\alpha_p,\alpha_P}.
\end{eqnarray*}
Hence it is easy to give a conjugation for  $W_{\psi,\varphi}$ in this case.

\begin{theorem}\label{B} For a normal weighted composition operator $W_{\psi,\varphi}$ where  $\varphi$  has an interior fixed point $p$, i.e.,
$$\psi=\gamma\frac{K_p}{K_p\circ\varphi}\ \ and \ \ \varphi=\alpha_p\circ(\delta\alpha_p),$$
where $\alpha_p(z)=\frac{p-z}{1-\overline{p}z}$, $\gamma$, $\delta$ are constants with $\delta$ satisfying $|\delta|\leq1$, it is
complex symmetric with conjugation
$\mathcal{J} W_{k_q,\sigma}$ where $q=\frac{p-\overline{p}}{p^2-1}$.
\end{theorem}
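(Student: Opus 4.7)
The plan is to apply Theorem~\ref{cwc} directly with conjugation parameter $q=(p-\overline p)/(p^2-1)$, and exhibit parameters $a_0,a_1,c$ that put the $\psi,\varphi$ of Theorem B into the rational form provided there. This splits the work into a short verification that $\mathcal{J}W_{k_q,\sigma}$ is a conjugation, and the bulk of the argument, which is algebraic matching.

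First I would set $\lambda=(1-p^2)/(\overline p^2-1)$. A short calculation gives $|1-p^2|^2=(1-p^2)(1-\overline p^2)=|\overline p^2-1|^2$, so $|\lambda|=1$, and multiplying out shows $\lambda q=\overline q$. Lemma~\ref{conjugation} then guarantees that $\mathcal{J}W_{k_q,\sigma}$ is a conjugation.

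Next I would write $\varphi$ and $\psi$ as concrete rational functions. Expanding $\alpha_p\circ(\delta\alpha_p)$ over a common denominator gives
\[
\varphi(z)=\frac{p(1-\delta)+(\delta-|p|^2)z}{1-|p|^2\delta-\overline p(1-\delta)z},
\]
and a direct computation shows $1-\overline p\,\varphi(z)=(1-|p|^2)(1-\overline pz)/(1-|p|^2\delta-\overline p(1-\delta)z)$, whence
\[
\psi(z)=\gamma\frac{K_p}{K_p\circ\varphi}(z)=\frac{\gamma(1-|p|^2)}{1-|p|^2\delta-\overline p(1-\delta)z}.
\]
Matching the common denominator and the $\psi$-numerator against the target forms in Theorem~\ref{cwc} (allowing a scalar factor $k$ between the two denominators) yields three equations that determine
\[
k=\frac{(\overline p^2\delta-1)(p^2-1)}{1-|p|^2},\qquad a_0=\frac{p-\overline p\delta}{1-\overline p^2\delta},\qquad c=\frac{\gamma(1-|p|^2)}{k},
\]
and then the constant term in the numerator of $\varphi$ fixes $a_1$.

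The main obstacle is the final consistency check: the matching of the linear term in the numerator of $\varphi$ gives a fourth scalar equation in only three unknowns, so it must be automatic. After the above substitutions, it reduces to an algebraic identity in $p$ and $\delta$ whose crux is the clean relation $q^2-\overline\lambda=(1-|p|^2)^2/(p^2-1)^2$, which follows directly from the explicit formulas for $q$ and $\lambda$. Once that identity is in hand, Theorem~\ref{cwc} applies and delivers the claimed complex symmetry.
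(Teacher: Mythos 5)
Your proposal is correct, and I verified the key numbers: with $\lambda=(1-p^2)/(\overline p^{\,2}-1)$ one indeed has $|\lambda|=1$ and $\lambda q=\overline q$ (and also $|q|<1$, since $|p^2-1|^2-|p-\overline p|^2=(|p|^2-1)^2+2\,\mathrm{Re}\bigl((p^2-1)(\overline p^{\,2}-1)\bigr)-\dots$ works out to $(x^2+y^2-1)^2\ge 0$ for $p=x+iy$, a point worth one line in a write-up); the expansions of $\varphi$ and $\psi$ are right; the values $a_0=(p-\overline p\delta)/(1-\overline p^{\,2}\delta)$ and your $k$ are correct; and the leftover linear-term equation does hold identically --- it reduces to $\delta(1-|p|^2)(1-\overline p^{\,2})\bigl[(\overline p-p)+(p-\overline p)\bigr]=0$. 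However, your route is genuinely different from the paper's. The paper does not invoke Theorem~\ref{cwc} at all here: it extracts from Bourdon--Narayan's proof the unitary equivalence $W_{\psi,\varphi}=W_{k_p,\alpha_p}\circ C_{\delta z}\circ W_{k_p,\alpha_p}^{-1}$, observes that $C_{\delta z}$ is $\mathcal J$-symmetric, transports the conjugation to $W_{k_p,\alpha_p}\,\mathcal J\,W_{k_p,\alpha_p}^{-1}$, and then identifies this as $\mu\,\mathcal J W_{k_q,\sigma}$ with $|\mu|=1$ (a unimodular multiple of a conjugation being again a conjugation for the same operator). That argument explains \emph{where} $q=(p-\overline p)/(p^2-1)$ comes from and is shorter, at the cost of quoting the internal structure of someone else's proof; your argument is self-contained modulo Theorem~\ref{cwc} and purely computational, at the cost of having to discover $q$, $\lambda$, $a_0$, $a_1$, $c$ by coefficient matching and to verify one overdetermined consistency equation. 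Two small points to tighten: state explicitly that $q\in\mathbb D$ (needed for $k_q$, $\sigma$, and Lemma~\ref{conjugation} to apply), and note that when $p\in\mathbb R$ you have $q=0$, so any step that divides by $q$ should be replaced by checking the final polynomial identities directly (they do hold, and in that case the conjugation degenerates to $\mathcal J$ itself, consistent with Theorem~A).
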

\begin{proof}
It is obvious that  $C_{\delta z}$ is complex symmetric with classical conjugation $\mathcal{J}.$  By the above argument, $W_{\psi,\varphi}$
is complex symmetric with conjugation $W_{k_p,\alpha_p}\circ \mathcal{J}\circ W_{k_p^{-1}\circ\alpha_p,\alpha_P}$. Elementary calculation will show that
$$ W_{k_p,\alpha_p}\circ \mathcal{J}\circ W_{k_p^{-1}\circ\alpha_p,\alpha_P}= \mu\mathcal{J} W_{k_q,\sigma},$$
where $q=\frac{p-\overline{p}}{p^2-1}$ and $\mu=\frac{|1-p|}{1-p}$. \end{proof}

For Theorem C, we will only consider  the case when $\varphi$ admits a boundary fixed point since the case when $\varphi$ admits an interior fixed point
is already answered in Theorem \ref{B}. Another reason is the following.

\begin{lemma}\label{bc}Suppose that
  $$\varphi(z)=\frac{az+b}{cz+d},\ \  ad-bc\neq 0$$
  is a linear fractional selfmap of the unit disk, which admits a boundary fixed point $\eta\in\mathbb{T}$,   and $\psi=K_{\varphi^*(0)}$                                                               where $\varphi^*(z)=\frac{\overline{a}z-\overline{c}}{-\overline{b}z+\overline{d}}$. Then $W_{\psi,\varphi}$ acting on $H^2(\mathbb{D})$
  is normal if and only if $|b|=|c|$.
\end{lemma}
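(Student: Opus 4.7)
The plan is to read Theorem C as an identity between two weighted composition operators on $H^2(\mathbb{D})$ and then use the boundary fixed-point hypothesis to collapse the resulting conditions to the single constraint $|b|=|c|$.

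First I would rewrite the identity in Theorem C as $W_{\psi_1,\varphi^*\circ\varphi}=W_{\psi_2,\varphi\circ\varphi^*}$, where $\psi_1(z)=|d|^2/\bigl(|d|^2-|b|^2-(\overline{b}a-\overline{d}c)z\bigr)$ and $\psi_2(z)=|d|^2/\bigl(|d|^2-|c|^2-(\overline{b}d-c\overline{a})z\bigr)$. Testing this operator identity on $f\equiv 1$ forces $\psi_1=\psi_2$, and then testing on $f(z)=z$ forces $\varphi^*\circ\varphi=\varphi\circ\varphi^*$. Hence normality is equivalent to the pair of scalar identities obtained by comparing the constant and linear coefficients of the denominators of $\psi_1$ and $\psi_2$, together with the symbol identity $\varphi^*\circ\varphi=\varphi\circ\varphi^*$.

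Matching the denominators yields $|b|^2=|c|^2$ from the constant terms and $\overline{b}a-\overline{d}c=\overline{b}d-c\overline{a}$ from the linear terms, which I would rearrange into the symmetric form $\overline{b}(a-d)=-c\,\overline{(a-d)}$. Now I would bring in the boundary fixed point: the equation $\varphi(\eta)=\eta$ with $|\eta|=1$ rearranges algebraically to $a-d=c\eta-b\overline{\eta}$. Substituting this into $\overline{b}(a-d)+c\,\overline{(a-d)}$ and collecting terms collapses it to $(|c|^2-|b|^2)\overline{\eta}$, so the linear-coefficient condition is automatic once $|b|=|c|$. Thus both pieces of $\psi_1=\psi_2$ amount to the single condition $|b|=|c|$.

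It remains to verify that, under $|b|=|c|$ and the boundary fixed-point hypothesis, the symbol identity $\varphi^*\circ\varphi=\varphi\circ\varphi^*$ also holds. A direct LFT computation gives numerators with leading coefficients $|a|^2-|c|^2$ versus $|a|^2-|b|^2$ and denominators with constant terms $|d|^2-|b|^2$ versus $|d|^2-|c|^2$, which match as soon as $|b|=|c|$; the remaining two coefficient matches (the constant term of the numerator and the linear coefficient of the denominator) both simplify to the very relation $\overline{b}(a-d)=-c\,\overline{(a-d)}$ already established. Hence the symbol equality is free and the equivalence is proved. The main obstacle I anticipate is the bookkeeping of these eight LFT coefficients and the observation that the symbol equality introduces no condition independent of $|b|=|c|$; this is where the fixed-point identity $a-d=c\eta-b\overline{\eta}$ is doing the essential work.
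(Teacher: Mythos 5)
Your proposal is correct and follows essentially the same route as the paper: test the operator identity of Theorem C on $f\equiv 1$ to extract $|b|=|c|$, then conversely use the boundary fixed-point relation $a-d=c\eta-b\overline{\eta}$ to show both that the two weight functions coincide (via $(|c|^2-|b|^2)\overline{\eta}=0$) and that $\varphi^*\circ\varphi=\varphi\circ\varphi^*$ by direct coefficient comparison. The only difference is that you make the redundancy of the linear-coefficient and symbol-commutation conditions slightly more explicit, which the paper handles by the same computation.
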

\begin{proof}
We need to show that Eq.(\ref{14}) is equivalent to $|b|=|c|$.

If (\ref{14}) holds, then by taking test function $f\equiv 1$,  we get
$$
  \frac{|d|^2}{|d|^2-|b|^2-(\overline{b}a-\overline{d}c)z}
  =\frac{|d|^2}{|d|^2-|c|^2-(\overline{b}d-c\overline{a})z}.  $$
Since $d\neq 0$, we have $|b|=|c|$.

If $|b|=|c|$, since $$\varphi(\eta)= \frac{a\eta+b}{c\eta+d}=\eta,$$ we get
$$ a-d=c\eta-b\overline{\eta}, $$
and then
 \begin{eqnarray*}
     &&    (\overline{b}a-\overline{d}c)-  (\overline{b}d-c\overline{a})\\
     &=& \overline{b}(a-d) +c(\overline{a}-\overline{d}) \\
     &=& \overline{b}(c\eta-b\overline{\eta})+c(\overline{c}\overline{\eta}-\overline{b}\eta) \\
     &=& (|c|^2-|b|^2)\overline{\eta}\\&=&0,
  \end{eqnarray*}
Hence $$
                \frac{|d|^2}{|d|^2-|b|^2-(\overline{b}a-\overline{d}c)z}
                =\frac{|d|^2}{|d|^2-|c|^2-(\overline{b}d-c\overline{a})z}.  $$
Elementary calculation shows that
$$\varphi^*(\varphi(z))=\frac{(|a|^2-|c|^2)z+b\overline{a}-d\overline{c}}{(\overline{d}c-\overline{b}a)z+|d|^2-|b|^2},$$
$$\varphi(\varphi^*(z))=\frac{(|a|^2-|b|^2)z+b\overline{d}-a\overline{c}}{(\overline{a}c-\overline{b}d)z+|d|^2-|c|^2},$$
so we also have $\varphi^*\circ\varphi=\varphi\circ\varphi^*.$  Therefore, (\ref{14}) holds.

The proof is complete. \end{proof}

\begin{theorem}Suppose that
  $$\varphi(z)=\frac{az+b}{cz+d},\ \  ad-bc\neq 0, |b|=|c|$$
  is a linear fractional selfmap of the unit disk, which admits a boundary fixed point $\eta\in\mathbb{T}$,   and $\psi=K_{\varphi^*(0)}$
  where $\varphi^*(z)=\frac{\overline{a}z-\overline{c}}{-\overline{b}z+\overline{d}}$. Then $W_{\psi,\varphi}$ acting on $H^2(\mathbb{D})$
  is complex symmetric under the conjugation $\mathcal{J}W_{k_{p\overline{\eta}},\sigma}$, where $p$ is a nonzero point such that $bp(\overline{p}-1)=c\eta^2\overline{p}(1-p)$.
\end{theorem}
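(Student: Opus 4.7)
The plan is to realize this normal weighted composition operator as a specific instance of the complex symmetric class characterized in Theorem \ref{cwc}, with the parameter $p$ of that theorem replaced by $P := p\overline{\eta}$. First I would verify that $\mathcal{J}W_{k_{P},\sigma}$ genuinely defines a conjugation: Lemma \ref{conjugation} demands $\overline{P}=\lambda P$, forcing $\lambda = \overline{P}/P = \eta^2\overline{p}/p$, which has modulus one, so the choice is admissible.

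Next, I would match the weight. A direct computation gives $\psi(z) = K_{\varphi^*(0)}(z) = d/(cz+d)$ from $\varphi^*(0) = -\overline{c}/\overline{d}$. Equating this with the form
$$\psi(z) = \frac{C}{(1-a_0P)-(P-\overline{\lambda}a_0)z}$$
prescribed by Theorem \ref{cwc} and matching coefficients yields $C = 1-a_0 P$ together with a single linear equation that determines $a_0 = (c+dP)/(d\overline{\lambda}+cP)$. At this stage the form for $\varphi$ required by the theorem depends only on the remaining parameter $a_1$. Combining that form over its common denominator and rescaling so the denominator becomes $cz+d$, I can always choose $a_1$ so that the constant term in the numerator equals $b$, leaving one final identity to be verified: that the coefficient of $z$ in the numerator equals $a$.

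This last identity is where the hypotheses must be used. Invoking the boundary fixed-point relation $a-d = c\eta - b\overline{\eta}$ (derived in the proof of Lemma \ref{bc}) together with $|b|=|c|$ collapses the required identity to a single scalar equation relating $b/c$, $P$, and $\overline{\lambda}$. Substituting $P = p\overline{\eta}$ and $\overline{\lambda} = \overline{\eta}^2 p/\overline{p}$ then converts this into precisely the stated condition on $p$, namely $bp(\overline{p}-1) = c\eta^2\overline{p}(1-p)$. Once this is in place, Theorem \ref{cwc} immediately produces the desired complex symmetry.

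The main obstacle is the algebraic bookkeeping in the third step: three separate constraints (the form of $\psi$, the boundary fixed point, and $|b|=|c|$) must conspire so that a single scalar identity in the data of $\varphi$ is equivalent to the hypothesis on $p$. The assumption $|b|=|c|$ ensures that both sides of the resulting identity have the same modulus after normalization, so the constraint on $p$ admits a real one-parameter family of nonzero solutions in $\mathbb{D}$ and the theorem is not vacuous.
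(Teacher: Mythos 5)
Your proposal is correct in outline, but it takes a genuinely different route from the paper. The paper does not invoke Theorem~\ref{cwc} here at all: it first conjugates by the rotation $C_{\eta z}$ to normalize the data so that $\varphi_1(1)=1$ and $|b_1|=1$, and then verifies the defining identity $W_{k_{p},\sigma_1}W_{\psi_1,\varphi_1}\mathcal{J}W_{k_{p},\sigma_1}(K_w)=\mathcal{J}W_{\psi_1,\varphi_1}^{*}(K_w)$ by an explicit computation of both sides on reproducing kernels. Your plan instead specializes the parameters of Theorem~\ref{cwc}: matching $\psi=K_{\varphi^{*}(0)}=d/(cz+d)$ against the prescribed form forces $a_0=(c+dP)/(cP+d\overline{\lambda})$, matching the constant term of the numerator of $\varphi$ fixes $a_1$, and the leftover $z$-coefficient identity is equivalent (using $\overline{\lambda}=P/\overline{P}$) to $a_0=\varphi(\overline{P})$, i.e.\ to $c\overline{P}-bP=(a-d)|P|^{2}$; inserting $a-d=c\eta-b\overline{\eta}$ and $P=p\overline{\eta}$ reduces this to the condition on $p$. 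This is more in the spirit of how the paper treats the unitary and Hermitian subclasses, it avoids the heavy kernel computation, and it actually explains where the condition on $p$ comes from (as the compatibility of the two determinations of $a_0$) rather than merely verifying it; the paper's computation is self-contained but opaque. One caution: if you carry the algebra through you will land on $bp(\overline{p}-1)=c\eta^{2}\overline{p}(p-1)$, not $c\eta^{2}\overline{p}(1-p)$ as literally stated in the theorem. This is not an error on your part: the paper's own proof uses $b_1p(\overline{p}-1)=\overline{p}(p-1)$ and the subsequent remark solves $b_1=\overline{p}(1-p)/\bigl(p(1-\overline{p})\bigr)$, both of which agree with your sign, so the statement contains a sign typo that your derivation would expose; the role of $|b|=|c|$ is, as you say, exactly the solvability condition $|b_1|=1$ for that equation.
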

\begin{proof}
From Lemma \ref{bc}, we know $W_{\psi,\varphi}$ is normal and then is complex symmetric.

 If we denote
$$\varphi_1(z)=\overline{\eta}\varphi(\eta z)=\frac{a_1z+b_1}{z+d_1}$$ where $a_1=a\overline{\eta}/c$, $b_1=b\overline{\eta}^2/c$ and  $ d_1=d\overline{\eta}/c,$
 then $\varphi_1(1)=1$, $|b_1|=1$ and it suffices to prove that  $C_{\eta z} W_{\psi,\varphi}C_{\overline{\eta}z}$   is complex symmetric under the conjugation
 $C_{\eta z}\mathcal{J} W_{k_p,\sigma}C_{\overline{\eta}z}$. Direct calculation shows that    $$C_{\eta z} W_{\psi,\varphi}C_{\overline{\eta}z}=W_{\psi_1,\varphi_1}$$
 and   $$C_{\eta z}\mathcal{J} W_{k_{p\overline{\eta}},\sigma}C_{\overline{\eta}z}=\mathcal{J}W_{k_{p},\sigma_1},$$
where  $\psi_1(z)=\psi(\eta z)$ and $\sigma_1(z)=\overline{\eta}\sigma(\overline{\eta}z)$.

Now, we will prove  that $$ \mathcal{J}W_{k_{p_1},\sigma_1}W_{\psi_1,\varphi_1}=W_{\psi_1,\varphi_1}^*\mathcal{J}W_{k_{p_1},\sigma_1}$$
 which is equivalent to
 $$ W_{k_{p_1},\sigma_1}W_{\psi_1,\varphi_1} \mathcal{J}W_{k_{p_1},\sigma_1}=\mathcal{J}W_{\psi_1,\varphi_1}^*.$$
   Since the span of point evaluation functionals is dense in $H^2(\mathbb{D})$,  it suffices to check that
\begin{equation}\label{16}
  W_{k_{p_1},\sigma_1}W_{\psi_1,\varphi_1} \mathcal{J}W_{k_{p_1},\sigma_1}(K_w)=\mathcal{J}W_{\psi_1,\varphi_1}^*(K_w),
\end{equation}
for all $w\in\mathbb{D}$.

For the right side of (\ref{16}), we have \begin{align*}
 &\mathcal{J}W^*_{\psi_1,\varphi_1}(K_w)(z)=\mathcal{J}(\overline{\psi_1(w)}K_{\varphi_1(w)})(z)=\psi_1(w)K_{\overline{\varphi_1(w)}}(z)\\
 &\ \ \ \ \ \ \ \ \ \  \ \ \ \ \ \ \ \ \ \ \ \ \ \ =\frac{d_1}{w+d_1}\cdot\frac{1}{1-\frac{a_1w+b_1}{w+d_1}z}=\frac{d_1}{w+d_1-(a_1w+b_1)z}.
\end{align*} For the left side of (\ref{16}), we have

\begin{eqnarray*}
   && W_{k_p,\sigma} W_{\psi_1,\varphi_1} \mathcal{J}W_{k_p,\sigma}(K_w)(z) \\
   &=& W_{k_p,\sigma} W_{\psi_1,\varphi_1} \mathcal{J}\left(\frac{\sqrt{1-|p|^2}}{1-\overline{p}z}
        \cdot\frac{1}{1-\overline{w}\frac{\overline{p}}{p}\frac{p-z}{1-\overline{p}z}}\right)\\
  &=& W_{k_p,\sigma}W_{\psi_1,\varphi_1} \mathcal{J}\left(\frac{p\sqrt{1-|p|^2}}
     {(p-|p|^2\overline{w})+(\overline{p}\overline{w}-|p|^2)z}\right)\\
 &=& W_{k_p,\sigma} W_{\psi_1,\varphi_1} \left(\frac{\overline{p}\sqrt{1-|p|^2}}
   {(\overline{p}-|p|^2w)+(pw-|p|^2)z}\right)\\
  &=& W_{k_p,\sigma}\left(\frac{d_1}{z+d_1}\frac{\overline{p}\sqrt{1-|p|^2}}
    {(\overline{p}-|p|^2w)+(pw-|p|^2)\frac{a_1z+b_1}{z+d_1}}\right)\\
  &=&W_{k_p,\sigma}\left(\frac{d_1\overline{p}\sqrt{1-|p|^2}}
   {(d_1\overline{p}-b_1|p|^2-d_1|p|^2w+b_1pw)+(a_1pw-|p|^2w-a_1|p|^2+\overline{p})z}\right)\\
 &=&\frac{\sqrt{1-|p|^2}}{1-\overline{p}z}\cdot
    \left(\frac{d_1\overline{p}\sqrt{1-|p|^2}}
   {(d_1\overline{p}-b_1|p|^2-d_1|p|^2w+b_1pw)+(a_1pw-|p|^2w-a_1|p|^2+\overline{p})
   \cdot\frac{|p|^2-\overline{p}z}{p-|p|^2z}}\right)\\
 &=&\frac{d_1|p|^2(1-|p|^2)}
 {(d_1\overline{p}-b_1|p|^2-d_1|p|^2w+b_1pw)(p-|p|^2z)+(a_1pw-|p|^2w-a_1|p|^2+\overline{p})(|p|^2-\overline{p}z)}\\
 &=&\frac{d_1|p|^2(1-|p|^2)}{(Aw+B)-(Cw+D)z},
\end{eqnarray*}
where

\begin{equation*}
\begin{matrix}
 A=p^2(b_1+a_1\overline{p}-\overline{p}^2-d_1\overline{p}), \ \ \ \ & B=|p|^2(d_1+\overline{p}-a_1|p|^2-b_1p), \\
    C=|p|^2(a_1+b_1p-\overline{p}-d_1|p|^2), & D=\overline{p}^2(d_1p+1-a_1p-b_1p^2).

\end{matrix}
\end{equation*}
Using the condition $\varphi_1(1)=1$ and $b_1p(\overline{p}-1)=\overline{p}(p-1)$, one can easily verify that (\ref{16}) holds. This completes the proof.
\end{proof}

\begin{remark}
The equation $$b_1p(\overline{p}-1)+\overline{p}(1-p)=0,\ \ |b_1|=1$$
has lots of solutions in the unit disk. In fact, write $p=re^{i\theta}\neq 0$, then
$$b_1=\frac{\overline{p}(1-p)}{p(1-\overline{p})}=\frac{r-e^{-i\theta}}{r-e^{i\theta}} =e^{2i\arg(r-e^{-i\theta})}$$
which  has exactly two solutions of $\theta$ for any fixed $r\in (0,1)$.
\end{remark}

\subsection{Algebraic $W_{\psi,\varphi}$ of degree $\leq$ 2}  So far,  all complex symmetric weighted composition operators $W_{\psi,\varphi}$ have linear fractional symbols. In this part, We will show  that the weight function $\psi$ can be not linear fractional even when $\varphi$ is.
The counterexample is a weighted composition operator that is  algebraic of degree two.    The following theorem characterize when a weighted composition operator is algebraic with degree no more than two.
\begin{theorem}\label{al2}
Suppose that $\varphi\in S(\mathbb{D})$  and $\psi\in H(\mathbb{D})$ is not identically zero. Then $W_{\psi,\varphi}$
is algebraic with degree $\leq$ 2 exactly when one of the followings holds: \\
 $(1)$ $\varphi$  is a constant function;\\
 $(2)$ $\varphi$ is the identity  map and $\psi$ is a constant function;\\
 $(3)$ $\varphi(z)=\alpha_p(-\alpha_p(z))$ with $p\in\mathbb{D}$  and $$\psi\circ\alpha_p(z)= c \exp\{\sum\limits_{k=0}^{\infty}a_{2k+1}z^{2k+1}\}\in H(\mathbb{D}).$$
\end{theorem}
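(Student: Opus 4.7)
My plan is to split the proof into the easy implication (each listed case yields an annihilating polynomial of degree at most two) and the converse (algebraicity forces the listed form). For the $(\Leftarrow)$ direction I would compute $W_{\psi,\varphi}^{2}$ directly: in case (1) with $\varphi\equiv w_0$ this gives $W_{\psi,\varphi}^{2}=\psi(w_0)\,W_{\psi,\varphi}$; in case (2) one already has $W_{\psi,\varphi}=cI$; in case (3), using that $\alpha_p$ is itself an involution (so $\varphi=\alpha_p\circ(-\alpha_p)$ is an involution too) and that $h(u):=\sum_{k\ge 0} a_{2k+1} u^{2k+1}$ is odd, a direct calculation yields $(\psi\circ\varphi)(z)=c\exp(-h(\alpha_p(z)))$, whence $\psi(z)\psi(\varphi(z))\equiv c^{2}$ and therefore $W_{\psi,\varphi}^{2}=c^{2} I$.

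For the converse I would first dispose of degree one: $W_{\psi,\varphi}=cI$ forces $\psi\equiv c$ (apply to $f\equiv 1$) and then $\varphi=\mathrm{id}$ (apply to $f(z)=z$), recovering case (2). For degree exactly two, write $W_{\psi,\varphi}^{2}+\alpha W_{\psi,\varphi}+\beta I=0$ and apply both sides to each monomial $z^n$ to obtain the family of pointwise identities
\begin{equation*}
\psi(z)\,\psi(\varphi(z))\,\varphi(\varphi(z))^{n}+\alpha\,\psi(z)\,\varphi(z)^{n}+\beta z^{n}\equiv 0,\qquad n=0,1,2,\ldots
\end{equation*}
Eliminating $\psi\cdot(\psi\circ\varphi)$ using the $n=0$ equation and substituting into $n=1,2$ produces, after one short manipulation, the key identity $\beta(z-\varphi\circ\varphi)(z-\varphi)\equiv 0$. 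Hence at least one of $\beta=0$, $\varphi\circ\varphi=z$, or $\varphi=\mathrm{id}$ must hold. If $\varphi=\mathrm{id}$, the $n=0$ equation collapses to $\psi^{2}+\alpha\psi+\beta\equiv 0$, and since a holomorphic function whose values lie in a finite set must be constant, $\psi$ reduces to a constant, which is case (2). If $\beta=0$, a separate treatment of $\alpha=0$ and $\alpha\ne 0$ (using $\psi\not\equiv 0$ together with the relation $\alpha\psi(\varphi^n-(\varphi\circ\varphi)^n)\equiv 0$) forces $\varphi=\varphi\circ\varphi$; since a nonconstant holomorphic self-map of $\mathbb{D}$ which is the identity on its (open) image must itself be the identity, this pushes $\varphi$ to be constant, giving case (1).

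The remaining and most delicate case is $\varphi\circ\varphi=z$ with $\varphi\ne\mathrm{id}$. Since the iterates of $\varphi$ cannot converge to a boundary point, the Denjoy--Wolff theorem produces an interior fixed point $p$; conjugating by $\alpha_p$ and applying Schwarz's lemma identifies the conjugated involution with $z\mapsto -z$, yielding $\varphi=\alpha_p\circ(-\alpha_p)$. For $\psi$, composing the $n=0$ relation with $\varphi$ and subtracting gives $\alpha(\psi-\psi\circ\varphi)\equiv 0$: if $\alpha\ne 0$ then $\psi=\psi\circ\varphi$ together with the $n=0$ equation again reduces $\psi$ to a constant (a special instance of case (3) with $h\equiv 0$), while if $\alpha=0$ one has $\psi(z)\psi(\varphi(z))\equiv -\beta$ with $\beta\ne 0$, since otherwise $\psi$ would vanish on the (full) range of $\varphi$. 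Zero-freeness of $\psi$ then lets me define $h:=\log(\psi\circ\alpha_p)$ on the simply connected disk, and the functional relation translates into $h(u)+h(-u)=\log(-\beta)$, which kills every even Taylor coefficient of $h$ past the constant term and produces $\psi\circ\alpha_p(u)=c\exp\bigl(\sum_{k\ge 0} a_{2k+1} u^{2k+1}\bigr)$ as claimed. The main obstacle is precisely this zero-free logarithm step together with the bookkeeping needed to verify that the $\alpha\ne 0$ branch is absorbed into the same exponential family.
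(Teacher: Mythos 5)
Your proposal is correct, and in the crucial converse direction it is organized quite differently from the paper's proof. The paper normalizes the annihilating polynomial as $W_{\psi,\varphi}^2-BW_{\psi,\varphi}-C=0$ and splits on which of $B$, $C$ vanish; the hard case (both nonzero) is then handled by evaluating the monomial identities at $0$ and at $\varphi(0)$, invoking the Schwarz lemma to force $\varphi(z)=\lambda z$, and running a separate argument for $\lambda$ a root of unity versus not, all to show that this case contributes nothing beyond constants. Your derivation of the single factored identity $\beta\,(z-\varphi\circ\varphi)(z-\varphi)\equiv 0$ from only the $n=0,1,2$ monomial equations replaces that entire analysis: since $H(\mathbb{D})$ is an integral domain, one factor vanishes identically, and each branch leads directly to one of the three listed cases. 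This is a genuinely cleaner decomposition. Two further points in your favor: you prove the sufficiency direction (the "exactly when"), which the paper omits, and you justify via Denjoy--Wolff that a non-identity holomorphic involution of $\mathbb{D}$ has an interior fixed point, a step the paper takes for granted. Two small wording issues to fix in a final write-up: in the $\beta=0$, $\alpha\neq 0$ branch, $\varphi\circ\varphi=\varphi$ yields \emph{either} $\varphi$ constant (case (1)) \emph{or} $\varphi=\mathrm{id}$, in which case $\psi^2+\alpha\psi=0$ forces $\psi$ constant and you land in case (2), so do not collapse this branch to the constant alternative only; and in the subcase $\alpha=\beta=0$ the relation $\alpha\psi(\varphi^n-(\varphi\circ\varphi)^n)\equiv 0$ is vacuous, so you should argue directly from $\psi\cdot(\psi\circ\varphi)\equiv 0$ and $\psi\not\equiv 0$ that $\varphi$ is constant. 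Neither affects the validity of the argument.
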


\begin{proof}
Suppose that $W_{\psi,\varphi}$ satisfy the equation $$AW_{\psi,\varphi}^2-BW_{\psi,\varphi}-C=0 ,$$ with $|A|^2+|B|^2+|C|^2\neq0.$

Since $\psi$ is not identically  zero,  the degree of $W_{\psi,\varphi}$ is at least one. If the degree  is one, then we have
$BW_{\psi,\varphi}+C=0,$
thus $\psi\equiv-C/B$ and $\varphi$ is identity. If the degree of $P$  is two,  we assume for simplicity that $A=1$ and
divide it into several cases to analyze.

\subsubsection*{Case I} If $B\neq0$, $C=0$, that is
$$W_{\psi,\varphi}^2-BW_{\psi,\varphi}=0.$$
Taking test functions $f\equiv1$ and $g(z)=z$, we get
\begin{align*}
\left\{
{
\begin{array}{*{18}c}
\psi\cdot \psi\circ\varphi-B\psi=0\\
\psi\cdot \psi\circ\varphi\cdot\varphi\circ\varphi-B\psi\cdot\varphi=0,\\
\end{array}
}
\right.
\end{align*}
which is equivalent to
\begin{align*}
\left\{
{
\begin{array}{*{18}c}
 \psi\circ\varphi-B=0\\
\varphi\circ\varphi-\varphi=0.\\
\end{array}
}
\right.
\end{align*}
If $\varphi$ is constant, then $\psi\equiv B$ is also constant;   otherwise, $\psi\equiv B$  and $\varphi$ is the identity map since $\varphi(\mathbb{D})$
is a nonempty open set.

\subsubsection*{Case II} If $B=0$, $C\neq0$, that is
$$W_{\psi,\varphi}^2=C.$$ Taking test functions $f\equiv1$ and $g(z)=z$, we get \begin{align*}
\left\{
{
\begin{array}{*{18}c}
\psi\cdot \psi\circ\varphi=C\\
\varphi\circ\varphi=id,\\
\end{array}
}
\right. \end{align*}
If $\varphi$ is identity, then $\psi=\pm\sqrt{C}$ is  constant.

 If  $\varphi$   is  not identity, we first consider the simple case when the fixed point of $\varphi$ is zero.
In this case,  $\varphi(z)=-z$ and $\psi(z)\cdot \psi(-z)=C\neq0$.
Setting $\psi=e^{h}$, then $$h(z)+h(-z)=\log C,$$ which implies that  the even terms  in the series  expansion of  $h-\frac{1}{2}\log C$  vanish. Consequently, $$\psi(z)=\sqrt{C}\exp(\sum_{k=0}^{\infty}a_{2k+1}z^{2k+1}).$$ For the case where $\varphi(p)=p\neq0$,    set
$\widetilde{\psi}=\psi\circ\alpha_p$ and  $\widetilde{\varphi}=\alpha_p\circ\varphi\circ\alpha_p$, then $uC_\varphi$
is similar to $\widetilde{\psi}C_{\widetilde{\varphi}}$ and hence
$P(\widetilde{\psi}C_{\widetilde{\varphi}})=0$.
So we have $\widetilde{\psi}(z)=\sqrt{C}\exp(\sum_{k=0}^{\infty}a_{2k+1}z^{2k+1}).$

\subsubsection*{Case III} If $B\neq0$, $C\neq0$, that is   $$W_{\psi,\varphi}^2-BW_{\psi,\varphi}-C=0.$$
Taking   the monomials  $\{z^n\}_{n=0}^\infty$ as test functions, we get
\begin{equation}\label{17}
  \psi\cdot \psi\circ\varphi\cdot(\varphi\circ\varphi)^n=B\psi\cdot\varphi^n+Cz^n,\;n\;\geq 0.
\end{equation}

For $n=0$, if we let $z=0$,  then  we get $$\psi(0)\cdot \psi(\varphi(0))=B\psi(0)+C,$$ thus $\psi(0)\neq0$.

For $n\geq1$, if we let   $z=0$, then  we get $$\psi(\varphi(0))[\varphi(\varphi(0))]^n=B\varphi(0)^n,$$ that is,
 $$\left[\frac{\varphi(\varphi(0))}{\varphi(0)}\right]^n=\frac{B}{\psi(\varphi(0))},\ \ n\geq 1.$$
So we  must have $\varphi(\varphi(0))=\varphi(0)$ and $B=\psi(\varphi(0))$.
 Evaluate (\ref{17}) at $\varphi(0)$,
 \begin{eqnarray*}
   \psi(\varphi(0))^2\varphi^n(\varphi(0)) &=& B\psi(\varphi(0))\varphi^n(\varphi(0))+C\varphi(0)^n \\
    &=& \psi(\varphi(0))^2\varphi^n(\varphi(0))+C\varphi(0)^n,
 \end{eqnarray*} so we get $\varphi(0)=0$.

It follows from (\ref{17}) that
\begin{equation}\label{18}
  \psi(z)\cdot \psi(\varphi(z))\left[\frac{\varphi(\varphi(z))}{z}\right]^n=B\psi(z)\left(\frac{\varphi(z)}{z}\right)^n+C,\;\; n\geq0.
\end{equation}
Since $\varphi(0)=0$, the  equation (\ref{18}), along with the Schwartz lemma, implies  that $\varphi(z)=\lambda z$ for some $|\lambda|=1$.

Now,  the equation (\ref{17}) is equivalent to
\begin{equation}\label{19}
  \psi(z)\psi(\lambda z)\lambda^{2n}=B\psi(z)\lambda^n+C\;\;n\geq 0.
\end{equation}

If $\lambda=1$, then
 $$\psi^2(z)=B\psi(z)+C.$$
 $\psi$ has to be constant  and then $W_{\psi,\varphi}$ is a multiple of the identity.

If $\lambda=-1$, then
$$\psi(z)\psi(\lambda z)-C= B\psi(z)=-B\psi(z).$$
Since $B\neq0$ and $\psi$ is not identically zero, this is impossible.

 If $\lambda$ is rational and $\lambda^N=1$ with $N$ minimal and $N\geq3$.
  Then summing (\ref{19}) with respect to $n$, we get
  $$\psi(z)\psi(\lambda z)\sum_{n=0}^{N-1}\lambda^{2n}=B\psi(z)\sum_{n=0}^{N-1}\lambda^{n}+CN.$$
  Since
  $$\lambda^N-1=(\lambda-1)(1+\lambda+\lambda^2+\cdots+\lambda^{N-1})=0$$
  and
  \begin{align*}
  & \lambda^{2N}-1=(\lambda-1)(1+\lambda+\lambda^2+\cdots+\lambda^{2N-1})\\
  &\ \ \ \ \ \ \ \ \ \  =(\lambda-1)(1+\lambda)\sum_{n=0}^{N-1}\lambda^{2n}\\
  &\ \ \ \ \ \ \ \ \ \ =(\lambda^2-1)\sum_{n=0}^{N-1}\lambda^{2n}=0,
  \end{align*}
so we have  $C=0$, which is a contradiction.

   If $\lambda$ is irrational, then
   $$\psi(z)\psi(\lambda z)w^2=B\psi(z)w+C,\ \   z\in\mathbb{D},w\in\mathbb{C}.$$
   This is obviously impossible.

The proof is complete. \end{proof}

\begin{example} Let $\varphi(z)=-z$ and $\psi_1(z)=e^{z}$, $\psi_2(z)=e^{\sin{z}}$, then $W_{\psi_1,\varphi}$ and $W_{\psi_2,\varphi}$ are
   complex symmetric on $H^2(\mathbb{D})$.
\end{example}

\vspace{8pt}

\subsection{Open Problem}
We have shown above that the weight symbol $\psi$ of a complex symmetric $W_{\psi,\varphi}$  can be not a linear fractional map. We also want to know
\begin{problem}
Is it possible for a complex symmetric weighted composition operator $W_{\psi,\varphi}$  to have its composition symbol $\varphi$  a non linear fractional map? \end{problem}
\noindent We are also  interested  in the similar problem concerning normality, see also \cite{M}.
\begin{problem}
For a normal weighted composition operator $W_{\psi,\varphi}$ with $\varphi$ a linear fractional map, must $\psi$ be also linear fractional?    Ultimately,
can we get a complete characterization of when $W_{\psi,\varphi}$ is   normal on $H^2(\mathbb{D})$?
\end{problem}

\end{document}